\documentclass[12pt]{amsart}
\usepackage{amssymb,amsmath,amsfonts}
\usepackage{enumerate}
\usepackage{graphicx, color}
\usepackage[letterpaper, margin=1.2in]{geometry}

\allowdisplaybreaks

\newcommand{\N}{\mathbb N}
\newcommand{\R}{\mathbb R}
\newcommand{\Z}{\mathbb Z}
\newcommand{\Q}{\mathbb Q}
\newcommand{\C}{\mathbb C}
\newcommand{\rar}{\rightarrow}
\newcommand{\mc}{\mathcal}

\newtheorem*{theorem*}{Theorem}
\newtheorem{theorem}{Theorem}
\newtheorem{lemma}[theorem]{Lemma}

\begin{document}

\title[Higher moments of distances between consecutive Ford spheres]{Higher moments of distances between\\consecutive Ford spheres}
\author{Alan Haynes, Kayleigh Measures}

\thanks{MSC 2010: 11B57, 11N56, 11P21}
\keywords{Farey fractions, Ford spheres, Gauss circle problem}

\begin{abstract}
In previous work the second author derived an asymptotic formula for the sum of the distances between centers of consecutive Ford spheres. In this paper we extend these results by proving asymptotic formulas for higher moments of the distances. Our proofs rely on lattice point counting estimates with error terms coming from the Gauss circle problem.
\end{abstract}

\maketitle

\section{Introduction}
Over the last few decades there has been a surge of interest in attempting to understand finer properties of the Farey sequence and its generalizations. Much of this work is motivated by well known connections between Farey fractions and the Generalized Riemann Hypothesis \cite{Fran1924,Huxl1971,Land1924}, although a good amount of research has also arisen in other contexts, for example in the study of billiards \cite{BocaZaha2007} and dynamics in spaces of Euclidean lattices \cite{Mark2010}.

In 1938 L.~R.~Ford published the simple but elegant paper \cite{Ford1938}, in which he explained a geometric construction, using Farey fractions, of what are now referred to as Ford circles. In today's terminology, Ford's construction gives an example of an integral Apollonian circle packing between two lines. Ford circles appear naturally in the study of continued fractions, and they have found several interesting applications, for example to the construction of the contour of integration in the proof of Rademacher's formula for the partition function \cite[Chapter 5]{Apos1990}.

Recent work by Chaubey, Malik, and Zaharescu \cite{ChauMaliZaha2015} demonstrated how tools from analytic number theory can be used to give accurate asymptotic estimates for integral moments of distances between consecutive Ford circles whose centers lie above fixed horizontal lines. In \cite{Meas2018} the second author began a generalization of these results to a complex analogue of Ford circles, Ford spheres, which were also introduced by Ford in his 1938 paper. The main result of \cite{Meas2018} was an asymptotic formula for the sum of the distances between consecutive Ford spheres with centers lying above fixed horizontal planes. In this paper we generalize these results by proving asymptotic formulas for the higher integral moments.

Let $I_2$ denote the unit square in the upper right quadrant of the complex plane and for $S\in\N$ define
\begin{equation*}
\mathcal{G}_S=\left\{\frac{r}{s}\in I_2 ~:~ r,s\in\Z[i],~ (r,s)=1,~ |s|\le S  \right\}.
\end{equation*}
The Ford sphere corresponding to a point $r/s\in\mathcal{G}_S$ is a sphere of radius $1/2|s|^2$ in $\C\times\R^+$, tangent to the boundary of this region, with center at $(r/s,1/2|s|^2).$ As shown in \cite[Section 8]{Ford1938}, the Ford spheres corresponding to any two points $r/s$ and $r'/s'$ in $\mathcal{G}_S$ are either tangent, or they do not intersect. If they are tangent then, following Ford's convention, we say that the fractions $r/s$ and $r'/s'$ are adjacent. Furthermore, we say that two fractions $r/s, r'/s'\in\mathcal{G}_S$ are consecutive in $\mathcal{G}_S$ if they are adjacent and if there is at least one other fraction in $\Q[i]$, with corresponding Ford sphere of radius less than $1/2S^2$, which is adjacent to both $r/s$ and $r'/s'$. This is a natural geometric generalization of the notion of consecutive Farey fractions of a given order (see the discussion surrounding \cite[Definition 2.6]{Meas2018}).

Now for $k,S\in\N$ let
\begin{equation*}
  \mc{M}_k(S)=\sum_{\substack{\frac{r}{s},\frac{r'}{s'}\in\mc{G}_S\\\text{consecutive}}}\left(\frac{1}{2|s|^2}+\frac{1}{2|s'|^2}\right)^k,
\end{equation*}
where the sum is over all unordered pairs of fractions $r/s$ and $r'/s'$ which are consecutive in $\mathcal{G}_S$. These quantities, the $k$th moments of distances between centers of spheres corresponding to consecutive fractions in $\mathcal{G}_S$, are the precise analogues for Ford spheres of the moments for Ford circles considered in \cite{ChauMaliZaha2015}. It was proven in \cite{Meas2018} that, for any $\epsilon >0$,
\begin{equation*}
\mc{M}_1(S)=\frac{\pi}{4}\zeta_i^{-1}(2)(2C-1)S^2+O_\epsilon(S^{1+\epsilon}),
\end{equation*}
where $\zeta_i$ is the Dedekind zeta function for $\Q(i)$ and
\begin{equation*}
C=-\int_0^{1/\sqrt{2}}\ln \left(\sqrt{2}u\right)\left(1-u^2\right)^{1/2}~\mathrm{d}u.
\end{equation*}
The method of proof in \cite{Meas2018} does not immediately generalize to $k>1$, because the error terms in the corresponding calculations become of the same order of magnitude as what would otherwise be considered the main terms. However, by more refined lattice point counting techniques we are now able to prove the following result.
\begin{theorem}\label{thm.kthMom}
  For each integer $k\ge 2$, there exists a constant $\xi_k>0$ with the property that, for any $\epsilon>0$ and for any $S\in\N$,
  \begin{equation*}
    \mc{M}_k(S)=\xi_kS+O_{\epsilon}(S^{2\kappa+\epsilon}),
  \end{equation*}
  with $\kappa\le 131/416$.
\end{theorem}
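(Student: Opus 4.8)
The plan is to reduce $\mc M_k(S)$ to a weighted count of Gaussian-integer lattice points in a region bounded by circular arcs, and then to separate the main term from the error using the sharpest available bound for the Gauss circle problem.

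\emph{Reduction to a sum over denominators.} First I would use the description of consecutiveness from \cite{Meas2018} (cf.\ the discussion around \cite[Definition 2.6]{Meas2018}) to rewrite $\mc M_k(S)$ as a sum over pairs of Gaussian integers $s,s'$ — taken modulo units — with $|s|,|s'|\le S$ and $(s,s')=1$, subject to the mediant condition $\max_{\beta\in\{1,-1,i,-i\}}|s+\beta s'|>S$ (which is exactly the requirement that some common neighbour of the two Ford spheres have radius below $1/2S^2$), each pair weighted by the number $N(s,s')$ of admissible numerator pairs with $r/s,r'/s'\in I_2$ and $|rs'-r's|=1$, and by $(\tfrac1{2|s|^2}+\tfrac1{2|s'|^2})^k$. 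Since $I_2$ is a fundamental domain for the translation action of $\Z[i]$, for each unit $\eta$ the congruence $rs'\equiv\eta\pmod s$ pins down a unique representative $r$ with $r/s\in I_2$ and then $r'$ is forced; hence $N(s,s')$ is a fixed constant (coming from the units of $\Z[i]$) except when $r/s$ lies within $1/(|s||s'|)$ of $\partial I_2$, and, using that the numerators equidistribute modulo $s$ as $s'$ ranges over the relevant region, this discrepancy contributes $O_\epsilon(S^{2\kappa+\epsilon})$. We are thus reduced, up to a constant factor and an admissible error, to evaluating
\begin{equation*}
  \widetilde{\mc M}_k(S)=\sum_{\substack{|s|,|s'|\le S,\ (s,s')=1\\ \max_\beta|s+\beta s'|>S}}\left(\frac1{2|s|^2}+\frac1{2|s'|^2}\right)^k.
\end{equation*}

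\emph{Isolating the dominant terms and the role of $k\ge2$.} Expanding the $k$th power binomially, a mixed term $|s|^{-2j}|s'|^{-2(k-j)}$ with $0<j<k$ obeys
\begin{equation*}
  \sum_{(s,s')}\frac1{|s|^{2j}|s'|^{2(k-j)}}\le\Big(\sum_{0<|s|\le S}|s|^{-2j}\Big)\Big(\sum_{0<|s'|\le S}|s'|^{-2(k-j)}\Big)\ll(\log S)^2,
\end{equation*}
so every mixed term is $O_\epsilon(S^\epsilon)$; by the symmetry $s\leftrightarrow s'$ it therefore suffices to understand $\Sigma_k(S):=\sum_{\substack{|s|,|s'|\le S,\,(s,s')=1\\ \max_\beta|s+\beta s'|>S}}|s|^{-2k}$. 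The hypothesis $k\ge2$ enters decisively here: the series $\sum_{s\ne0}|s|^{-2k}$ and $\sum_{s\ne0}|s|^{-(2k-1)}$ both converge (as $2k\ge4$ and $2k-1\ge3$), which is what forces $\mc M_k(S)$ to grow only linearly and keeps the Gauss-circle error strictly below the main term; for $k=1$ these series diverge, and this is the obstruction to the method of \cite{Meas2018} referred to above.

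\emph{M\"obius inversion and lattice point counting.} Writing $s=d\sigma$, $s'=d\sigma'$ and detecting $(s,s')=1$ with the Gaussian M\"obius function $\mu_i$ gives $\Sigma_k(S)=\sum_d\mu_i(d)|d|^{-2k}\,\Sigma_k^\flat(S/|d|)$, where $\Sigma_k^\flat(T)=\sum_{0<|\sigma|\le T}|\sigma|^{-2k}C_T(\sigma)$ and $C_T(\sigma)$ counts $\sigma'\in\Z[i]$ with $|\sigma'|\le T$ and $\max_\beta|\sigma+\beta\sigma'|>T$. Now $C_T(\sigma)$ is the number of lattice points in the disk of radius $T$ about $0$ minus the number in the intersection of the four disks of radius $T$ about $\pm\sigma,\pm i\sigma$; each of these regions is convex and bounded by at most four arcs of radius $T$, so a uniform version of Huxley's estimate for the Gauss circle problem yields $C_T(\sigma)=T^2A(\sigma/T)+O_\epsilon(T^{2\kappa+\epsilon})$, with $2\kappa\le131/208$ and $A$ an explicit function (so $T^2A(\sigma/T)$ is the area of the lune in question) satisfying $A(w)\asymp|w|$ as $w\to0$ and $A(w)=0$ once $|w|$ exceeds an absolute constant. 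Substituting, the error contributes $\ll_\epsilon T^{2\kappa+\epsilon}\sum_{\sigma\ne0}|\sigma|^{-2k}\ll_\epsilon T^{2\kappa+\epsilon}$, and summing on $d$ costs nothing since $\sum_d|d|^{-2k-2\kappa}<\infty$; for the main term, Taylor expanding $A$ near the origin and discarding the convergent tail of the $\sigma$-sum gives $T^2\sum_\sigma A(\sigma/T)|\sigma|^{-2k}=c_kT+O_\epsilon(S^\epsilon)$ for an explicit $c_k>0$, and then $\sum_d\mu_i(d)|d|^{-2k}\cdot c_k(S/|d|)=c_kS\sum_d\mu_i(d)|d|^{-(2k+1)}$ produces the main term $\xi_kS$ with $\xi_k>0$ an absolutely convergent expression involving the Dedekind zeta function $\zeta_i$ and the angular integrals of $A$. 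Collecting the pieces yields $\mc M_k(S)=\xi_kS+O_\epsilon(S^{2\kappa+\epsilon})$.

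\emph{The main obstacle.} The heart of the argument is establishing the count $C_T(\sigma)$ with error of the exact shape $T^{2\kappa+\epsilon}$ \emph{uniformly} in the offset $\sigma$ over the full range $0<|\sigma|\le T$ (where the four-disk intersection may degenerate to a thin lens), and carrying the arithmetic side conditions — the Gaussian congruences governing adjacency, the M\"obius truncation down to scales $T=S/|d|$ that may be as small as a constant, and the $I_2$-membership of the numerators — through this estimate without degrading the uniformity. This is exactly where the cruder counting of \cite{Meas2018} breaks down for $k>1$, and where the Gauss circle exponent, hence the value $131/416$, makes its appearance; everything else amounts to bookkeeping with absolutely convergent sums.
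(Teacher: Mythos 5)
Your plan tracks the paper's proof quite closely at the strategic level: binomial expansion into a diagonal term (the one that produces the linear main term) and cross terms (bounded by powers of $\log S$, precisely because $k\ge2$ forces the relevant series to converge), followed by a M\"obius/Gauss-circle treatment of the diagonal contribution. The two places where you diverge from the paper's implementation are worth flagging. First, the M\"obius inversion: you invert over $d\mid\gcd(s,s')$, rescaling both variables by $d$ and ending with $\sum_d \mu_i(d)|d|^{-2k}\Sigma_k^\flat(S/|d|)$; the paper instead fixes $s$ and inverts over $d\mid s$ only, which converts the inner coprime count into $\phi_i(s)|s|^{-2}|\Omega_s|$, and the main term then emerges from two applications of Abel summation against the summatory function of $\phi_i$ (Lemma \ref{lem.SumPhii}). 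Both decompositions are legitimate and produce an admissible error $O_\epsilon(S^{2\kappa+\epsilon})$; yours is a bit slicker for the main term (Taylor-expanding the normalized area function $A$ near the origin), while the paper's has the advantage of producing explicit constants $z_1,z_k',z_k''$ and making the cancellation of the $S^2$ terms visible. Second, the reduction to denominators: the paper simply cites \cite[Lemma 4.3]{Meas2018} for the fact that each consecutive denominator pair corresponds to \emph{exactly} four numerator pairs, whereas you argue via equidistribution that the numerator count is ``a fixed constant except near $\partial I_2$'' with an error you assert to be $O_\epsilon(S^{2\kappa+\epsilon})$ — this is unnecessary work and the error analysis is not obviously of the claimed shape; you should just use the exact count from \cite{Meas2018}.

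Two minor inaccuracies. You claim the normalized area $A(w)$ vanishes for $|w|$ large; in fact, from the formula $|\Omega_s|=I_1(|s|)-2|s|^2$ one sees $A$ increases toward $2\pi-4$ at $|w|=\sqrt2$, not to zero — harmless here since the sum only involves $|w|\le1$, but worth correcting. And the step ``Taylor expanding $A$ near the origin and discarding the tail gives $c_kT+O_\epsilon(S^\epsilon)$'' needs one more sentence to be airtight: the clean argument is that $A(r)=4\sqrt2\,r+O(r^2)$ uniformly on $[0,1]$ (Taylor's theorem with remainder on a compact interval), so $T^2\sum_\sigma A(\sigma/T)|\sigma|^{-2k}=4\sqrt2\,T\sum_{|\sigma|\le T}|\sigma|^{1-2k}+O\bigl(\sum_{|\sigma|\le T}|\sigma|^{2-2k}\bigr)$, where the first sum converges (here $k\ge2$ enters again) and the second is $O(\log T)$ at worst; this is exactly the content of the paper's Abel-summation computation, but your route gets there with less machinery.
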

Throughout this paper we will use the quantity $\kappa$ to denote the smallest exponent which can be used in the error term of the generalized Gauss circle problem, as presented in \cite{Huxl2003}. Note that in some references to this problem the quantity we are calling $\kappa$ is replaced by $2\kappa$. The best known upper bound for $\kappa$ was obtained by Huxley \cite{Huxl2003}, who showed that it is no larger than $131/416$. We discuss some details of this problem below.

\section{Preliminary results}

\subsection{Notation}
First we introduce our notation, which is for the most part the same as that used in \cite{Meas2018}. We will denote the set of Gaussian integers $q$ for which $\mathrm{Re}(q)>0$ and $\mathrm{Im}(q)\ge 0$ by $\Z[i]^+$. For any $q\in\Z[i]$, we use the convention that $\sum_{d|q}$ denotes a sum over $d\in\Z[i]^+$ which divide $q$.

Any $q\in\Z[i]$ can be written uniquely in the form
\begin{equation*}
q=u\cdot p_1^{\alpha_1}p_2^{\alpha_2}\cdots p_k^{\alpha_k},
\end{equation*}
where $u\in\{\pm 1,\pm i\}$ and the $p_i$ are prime elements in $\Z[i]$, with $p_i\not=p_j$ for $i\not=j,$ $\mathrm{Re}(p_i)>0,$ and $\mathrm{Im}(p_i)\ge 0$. Using this representation, we define $\mu_i:\Z[i]\rar\Z$, the M\"{o}bius function for the Gaussian integers, by
\begin{equation*}
\mu_i(q)=\begin{cases}
1 &\text{if}~q=u,\\
(-1)^k &\text{if}~\alpha_1=\cdots=\alpha_k=1,\\
0 &\text{otherwise}.
\end{cases}
\end{equation*}
We also define $\phi_i:\Z[i]\rar\N$, the Euler-phi function for the Gaussian integers, by
\begin{equation*}
\phi_i(q)=\left|\left(\Z[i]/q\Z[i]\right)^*\right|.
\end{equation*}
As shown in \cite{Meas2018}, these functions satisfy the expected elementary number theoretic identities, namely that for any $q\in\Z[i]$,
\begin{equation*}
\sum_{d|q}\phi_i(d)=|q|^2,\qquad\sum_{d|q}\mu_i(d)=\begin{cases}
1 &\text{if}~q~\text{is a unit},\\
0 &\text{otherwise},
\end{cases}
\end{equation*}
and
\begin{equation*}
\frac{\phi_i(q)}{|q|^2}=\sum_{d|q}\frac{\mu_i(d)}{|d|^2}.
\end{equation*}

The sum of squares function $r_2:\N\rar\Z$ is defined by
\begin{equation*}
r_2(n)=\#\{(a,b)\in\Z^2~:~a^2+b^2=n\}.
\end{equation*}
As observed by Gauss, the summatory function of $r_2(n)$ can be estimated trivially by the asymptotic formula
\begin{equation*}
\sum_{n=1}^Nr_2(n)=\pi N+O(N^{1/2}).
\end{equation*}
The Gauss circle problem is the problem of obtaining best possible upper bounds for the quantities
\begin{equation*}
\left|\sum_{n=1}^Nr_2(n)-\pi N\right|,
\end{equation*}
as $N\rar\infty$. One of the first major non-trivial results in this direction was published in 1906 by Sierpinski \cite{Sier1906} (see also \cite{Schi1972}), who showed that
\begin{equation*}
\sum_{n=1}^Nr_2(n)=\pi N+O(N^{1/3}).
\end{equation*}
Subsequently many authors have made substantial contributions to this problem (see the introduction to \cite{Huxl2003} for a more extensive list), and the currently best known upper bound, due to Huxley \cite{Huxl2003}, is given by
\begin{equation*}
\sum_{n=1}^Nr_2(n)=\pi N+O\left(N^{131/416}(\log N)^{18627/8320}\right).
\end{equation*}
Huxley's proof applies equally well to regions which are homothetic dilations and translations of convex regions with sufficiently smooth boundaries. As mentioned in the introduction, throughout this paper we will use the constant $\kappa$ to denote the smallest number with the property that the error terms in such lattice point estimates are $\ll N^\kappa$. It is known, by work attributed in the literature to Hardy and also to Landau, that $\kappa>1/4$, and it follows from the above mentioned result of Huxley that $\kappa$ is no larger than $131/416$.

The Dedekind zeta function for the number field $\Q(i)$ is defined by the Dirichlet series
\begin{equation*}
\zeta_i(s)=\sum_{q\in\Z[i]^+}\frac{1}{|q|^{2s}}.
\end{equation*}
Using the main term in the asymptotic formula for the summatory function of $r_2(n)$, it is not difficult to show that this series converges absolutely and uniformly in compact subsets of the region $\mathrm{Re}(s)>1$. It also follows from the corresponding Euler product representation that, for $\mathrm{Re}(s)>1$,
\begin{equation*}
\zeta_i^{-1}(s)=\sum_{q\in\Z[i]^+}\frac{\mu_i(q)}{|q|^{2s}}.
\end{equation*}

\subsection{Elementary lemmas} Here we gather together a few lemmas which will be useful in the proof of our main result.
\begin{lemma}\label{lem.PowSum2}
  For $R\ge 1$ we have that
  \begin{equation*}
    \sum_{\substack{s\in\Z[i]\\0<|s|\le R}}\frac{1}{|s|^2}=2\pi\log R +O(1).
  \end{equation*}
\end{lemma}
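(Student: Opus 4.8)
The statement is a straightforward partial-summation (Abel summation) argument built on top of the Gauss circle estimate, so I would keep it short. Let $A(t) = \#\{s \in \Z[i] : 0 < |s| \le t\}$ denote the number of nonzero Gaussian integers in the disc of radius $t$. By the Gauss circle estimate quoted in the excerpt (in its trivial form $\sum_{n \le N} r_2(n) = \pi N + O(N^{1/2})$, applied with $N = t^2$ and subtracting the contribution of $n=0$), we have $A(t) = \pi t^2 + O(t)$ for $t \ge 1$. Then I would write
\begin{equation*}
  \sum_{\substack{s\in\Z[i]\\0<|s|\le R}}\frac{1}{|s|^2} = \int_{1^-}^{R}\frac{1}{t^2}\,\mathrm{d}A(t) = \frac{A(R)}{R^2} - A(1^-)\cdot 1 + \int_{1}^{R}\frac{2A(t)}{t^3}\,\mathrm{d}t,
\end{equation*}
where the $t=1^-$ lower endpoint is chosen so that the (finitely many) Gaussian integers of modulus exactly $1$ are included; since the interval $0<|s|<1$ contains no lattice points, $A(1^-)=0$ and the contribution of those points is captured by the jump of $A$ at $t=1$.

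Substituting $A(t) = \pi t^2 + O(t)$ into each term: the boundary term $A(R)/R^2 = \pi + O(1/R) = O(1)$; and the integral splits as
\begin{equation*}
  \int_{1}^{R}\frac{2\pi t^2}{t^3}\,\mathrm{d}t + \int_{1}^{R}\frac{O(t)}{t^3}\,\mathrm{d}t = 2\pi\log R + O\!\left(\int_{1}^{R}\frac{\mathrm{d}t}{t^2}\right) = 2\pi\log R + O(1).
\end{equation*}
Collecting the pieces gives the claimed $2\pi\log R + O(1)$.

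There is essentially no hard part here; the only points requiring a moment of care are (i) getting the lower-limit bookkeeping right so that the lattice points of modulus exactly $1$ (the units $\pm1,\pm i$) are counted, which is why I use the one-sided limit $1^-$ and note $A$ has a jump at $t=1$; and (ii) checking that the trivial square-root error term in the Gauss circle problem is already more than good enough — we do not need Sierpinski's or Huxley's refinements for this lemma, since any error $O(t^{\theta})$ with $\theta < 2$ would still produce a convergent tail integral and hence an $O(1)$ error. If one prefers to avoid Stieltjes integration entirely, the same computation can be run as a discrete Abel summation over dyadic-or-finer ranges of $|s|$, but the integral form above is cleaner to present.
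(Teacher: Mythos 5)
Your argument is correct and is essentially the same as the paper's: both are Abel/partial-summation arguments on the Gauss circle estimate. The paper carries it out as a discrete summation by parts over $\sum_{\ell\le R^2} r_2(\ell)/\ell$ (quoting the estimate with exponent $\kappa$, though as you observe the trivial square-root bound already suffices here), whereas you phrase it as a Riemann--Stieltjes integral against $A(t)=\pi t^2+O(t)$; the two are cosmetic variants of one another.
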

\begin{proof}
  Using partial summation, we have that
  \begin{align*}
    \sum_{0<|s|\le R}\frac{1}{|s|^2}&=\sum_{\ell\le R^2}\frac{r_2(\ell)}{\ell}\\
    &=\sum_{\ell\le R^2}\frac{1}{\ell(\ell+1)}\sum_{j=1}^\ell r_2(j)+\frac{1}{R^2+1}\sum_{j\le R^2}r_2(j)\\
    &=\sum_{\ell\le R^2}\frac{\pi \ell+O(\ell^{\kappa})}{\ell(\ell+1)}+\frac{\pi R^2+O(R^{2\kappa})}{R^2+1}\\
    &=\pi\sum_{\ell\le R^2}\frac{1}{\ell+1}+O(1)\\
    &=2\pi\log R +O(1).
  \end{align*}
\end{proof}

\begin{lemma}\label{lem.PowSumTail1}\cite[Lemma 3.6]{Meas2018} For $k>1$ and $R\ge 1$ we have that
\begin{equation*}
  \sum_{|s|\ge R}\frac{1}{|s|^{2k}}\ll_k\frac{1}{R^{2(k-1)}}.
\end{equation*}
\end{lemma}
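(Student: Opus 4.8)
The plan is to estimate the tail sum by a dyadic decomposition into annuli, using only the crude count of Gaussian integers in a disc that follows from Gauss's trivial estimate for the summatory function of $r_2$. Indeed, for any $T\ge 1$ we have
\begin{equation*}
\#\{s\in\Z[i]\,:\,|s|\le T\}=\sum_{n\le T^2}r_2(n)=\pi T^2+O(T)\ll T^2,
\end{equation*}
so no input beyond the elementary bound (certainly not Huxley's) is needed.

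Next I would split the range $|s|\ge R$ into dyadic annuli: for each integer $j\ge 0$ put $A_j=\{s\in\Z[i]\,:\,2^jR\le|s|<2^{j+1}R\}$, so that $\{s\,:\,|s|\ge R\}$ is the disjoint union of the $A_j$ over $j\ge 0$. On $A_j$ one has $|s|^{-2k}\le(2^jR)^{-2k}$, while the displayed count gives $\#A_j\le\#\{s\,:\,|s|<2^{j+1}R\}\ll(2^{j+1}R)^2\ll 4^jR^2$. Hence
\begin{equation*}
\sum_{s\in A_j}\frac{1}{|s|^{2k}}\ll 4^jR^2\cdot(2^jR)^{-2k}=R^{2-2k}\,2^{2j(1-k)}.
\end{equation*}
Summing over $j$ and using that $k>1$, so $1-k<0$, the series $\sum_{j\ge 0}2^{2j(1-k)}$ is a convergent geometric series with sum $O_k(1)$, and therefore
\begin{equation*}
\sum_{|s|\ge R}\frac{1}{|s|^{2k}}\ll_k R^{2-2k}=\frac{1}{R^{2(k-1)}},
\end{equation*}
as claimed; the hypothesis $R\ge 1$ is used only to guarantee that the right-hand side is a bona fide bounded error term.

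I do not expect any genuine obstacle: the only points requiring a little care are that the annuli $A_j$ tile the tail exactly once and that the geometric series in $j$ converges, which is precisely the place where the condition $k>1$ is needed. As an alternative that parallels the proof of Lemma~\ref{lem.PowSum2}, one could instead apply Abel summation directly to $\sum_{\ell\ge R^2}r_2(\ell)\ell^{-k}$: writing $A(\ell)=\sum_{n\le\ell}r_2(n)\ll\ell$ and noting $\ell^{-k}-(\ell+1)^{-k}\ll\ell^{-k-1}$, the boundary term vanishes for $k>1$ and one is left with $\sum_{\ell\ge R^2}A(\ell)\big(\ell^{-k}-(\ell+1)^{-k}\big)\ll\sum_{\ell\ge R^2}\ell^{-k}\ll_k(R^2)^{1-k}$ by comparison with an integral, giving the same bound.
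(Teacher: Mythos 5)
Your proposal is correct. Note that the paper does not reprove this lemma -- it is quoted from \cite[Lemma 3.6]{Meas2018} -- so the natural benchmark is the partial-summation technique the paper uses for Lemma \ref{lem.PowSum2}, namely Abel summation against the summatory function of $r_2$. Your primary argument, the dyadic decomposition into annuli $A_j$, is a genuinely different and if anything more elementary route: it needs only the crude count $\#\{s:|s|\le T\}\ll T^2$ and the convergence of the geometric series $\sum_j 2^{2j(1-k)}$ for $k>1$, with no boundary terms or integral comparisons to track. The estimates check out: $\#A_j\ll 4^jR^2$ and $|s|^{-2k}\le(2^jR)^{-2k}$ on $A_j$ give $R^{2-2k}2^{2j(1-k)}$ per annulus, and summing yields the claim with an implied constant depending only on $k$. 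Your alternative sketch via Abel summation on $\sum_{\ell\ge R^2}r_2(\ell)\ell^{-k}$ is the approach that parallels the source; it is slightly more delicate (one must handle the boundary term at infinity and the telescoping differences) but generalizes more readily to situations where one wants secondary terms in the asymptotic rather than just an upper bound. For the pure upper bound asserted here, either route suffices and your dyadic version is the cleaner of the two.
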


\begin{lemma}\label{lem.SumPhii} \cite[Lemma 3.7]{Meas2018} For $R\ge 1$ we have
\begin{equation*}
  \sum\limits_{\substack{s\in \Z[i]^+ \\ |s| \leq R}} \phi_i(s) = z_1R^4 + O\left(R^{2+2\kappa}\right),
\end{equation*}
with $z_1=\frac{\pi}{8} \zeta_i^{-1}(2).$
\end{lemma}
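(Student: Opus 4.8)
The plan is to reduce this sum to a Gauss-circle-type lattice count twisted by a convergent Dirichlet series, using the Gaussian-integer analogue of the identity $\phi=\mu*N$. Concretely, I would start from the identity $\phi_i(q)/|q|^2=\sum_{d\mid q}\mu_i(d)/|d|^2$ recorded above, multiply through by $|q|^2=|d|^2|q/d|^2$, and obtain $\phi_i(q)=\sum_{d\mid q}\mu_i(d)|q/d|^2$. Substituting this into the sum over $s$ and interchanging the order of summation gives
\[
\sum_{\substack{s\in\Z[i]^+\\|s|\le R}}\phi_i(s)=\sum_{\substack{d\in\Z[i]^+\\|d|\le R}}\mu_i(d)\sum_{\substack{s\in\Z[i]^+,\ d\mid s\\|s|\le R}}\left|\frac{s}{d}\right|^2=\sum_{\substack{d\in\Z[i]^+\\|d|\le R}}\mu_i(d)\,T(R/|d|),
\]
where $T(x):=\sum_{g\in\Z[i]^+,\,0<|g|\le x}|g|^2$. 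Here I use that, for a fixed $d\in\Z[i]^+$, the multiples of $d$ lying in $\Z[i]^+$ are in norm-preserving bijection with $\Z[i]^+$ via $s\mapsto(\text{the }\Z[i]^+\text{-associate of }s/d)$, and that the outer range is automatically restricted to $|d|\le R$ because every nonzero Gaussian integer has norm at least $1$.

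The second step is to estimate $T(x)$ for $x\ge 1$. Since each associate class of a nonzero Gaussian integer has exactly one representative in $\Z[i]^+$ and $|g|^2$ is constant on such a class, $T(x)=\tfrac14\sum_{0<|g|\le x}|g|^2=\tfrac14\sum_{n\le x^2}n\,r_2(n)$. Applying partial summation to $\sum_{n\le x^2}n\,r_2(n)$ together with $\sum_{n\le y}r_2(n)=\pi y+O(y^\kappa)$ yields $\sum_{n\le x^2}n\,r_2(n)=\tfrac{\pi}{2}x^4+O(x^{2+2\kappa})$, the lower-order polynomial terms produced by the partial summation being absorbed into the error since $\kappa>0$; hence $T(x)=\tfrac{\pi}{8}x^4+O(x^{2+2\kappa})$.

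Finally I would insert this estimate into the displayed identity. The main term is $\tfrac{\pi}{8}R^4\sum_{|d|\le R}\mu_i(d)/|d|^4$; completing the sum to all of $\Z[i]^+$ produces $\tfrac{\pi}{8}\zeta_i^{-1}(2)R^4=z_1R^4$, and the tail $\sum_{|d|>R}\mu_i(d)/|d|^4$ is $O(R^{-2})$ by Lemma~\ref{lem.PowSumTail1} with $k=2$, contributing only $O(R^2)$. The error term is bounded by $R^{2+2\kappa}\sum_{d\in\Z[i]^+}|d|^{-(2+2\kappa)}$, and this Dirichlet series converges because $1+\kappa>1$, so its contribution is $O(R^{2+2\kappa})$. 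Since $\kappa>1/4$ the $O(R^2)$ term is dominated, and we obtain $\sum_{s\in\Z[i]^+,\,|s|\le R}\phi_i(s)=z_1R^4+O(R^{2+2\kappa})$, as claimed.

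None of these steps is deep. The only points that need a little care are the bijection and associate-class bookkeeping used to pass between $\Z[i]^+$ and $\Z[i]$, both in the reindexing of the double sum and in the evaluation of $T(x)$, and the tracking of exponents through the partial summation so as to confirm that every error term really is $O(R^{2+2\kappa})$ (this is where one uses $\kappa>1/4$, though $\kappa>0$ would already suffice for the present lemma). The genuinely hard input — the value of the exponent $\kappa$ itself — is imported wholesale from the Gauss circle problem.
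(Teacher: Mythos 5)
Your proof is correct, and the paper itself does not reprove this lemma but simply cites \cite[Lemma 3.7]{Meas2018}, whose argument is the same standard one you give: write $\phi_i=\mu_i * |\cdot|^2$ via the identity $\phi_i(q)/|q|^2=\sum_{d\mid q}\mu_i(d)/|d|^2$, interchange summation, estimate the resulting weighted lattice count by partial summation against the Gauss circle bound, and complete the $\zeta_i^{-1}(2)$ series. Your bookkeeping of associate classes in $\Z[i]^+$ and of the exponents in the error terms is accurate, so there is nothing to fix.
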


\begin{lemma}\label{lem.PhiPowSum2} There is a constant $z_2>0$ such that, for $R\ge 1$,
\begin{equation*}
  \sum_{\substack{s\in\Z[i]^+\\|s|\le R}}\frac{\phi_i(s)}{|s|^4}=z_1\log R+\frac{z_1+z_2}{4}+O(R^{2\kappa-2}),
\end{equation*}
where $z_1$ is as above and
\begin{equation*}
z_2=\int_1^\infty\left(\sum\limits_{\substack{s\in \Z[i]^+ \\ |s| \leq t^{1/4}}} \phi_i(s)-z_1t\right)t^{-2}~\mathrm{d}t.
\end{equation*}
\end{lemma}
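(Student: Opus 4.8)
The plan is to deduce this from Lemma~\ref{lem.SumPhii} by partial summation. Set
\[
  A(R)=\sum_{\substack{s\in\Z[i]^+\\|s|\le R}}\phi_i(s),
\]
so that Lemma~\ref{lem.SumPhii} reads $A(R)=z_1R^4+O(R^{2+2\kappa})$, and note that $A(r)=0$ for $r<1$ since $|1|=1$ is the smallest modulus occurring in $\Z[i]^+$. Partial summation then gives
\[
  \sum_{\substack{s\in\Z[i]^+\\|s|\le R}}\frac{\phi_i(s)}{|s|^4}
  =\frac{A(R)}{R^4}+4\int_{1}^{R}\frac{A(r)}{r^5}\,\mathrm{d}r .
\]

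Next I would substitute $A(r)=z_1r^4+\bigl(A(r)-z_1r^4\bigr)$ into both terms on the right-hand side. The boundary term contributes $z_1+O(R^{2\kappa-2})$. In the integral, the portion arising from $z_1r^4$ is a constant multiple of $\int_1^R r^{-1}\,\mathrm{d}r=\log R$, which furnishes the logarithmic term, while what remains is $4\int_1^R r^{-5}\bigl(A(r)-z_1r^4\bigr)\,\mathrm{d}r$. The key point is that $A(r)-z_1r^4\ll r^{2+2\kappa}$ with $2+2\kappa<4$ (Huxley's theorem gives $\kappa\le131/416$, though any $\kappa<1$ would suffice here), so the integrand is $\ll r^{2\kappa-3}$ with exponent strictly below $-1$. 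Hence $\int_1^\infty r^{-5}\bigl(A(r)-z_1r^4\bigr)\,\mathrm{d}r$ converges, its tail over $[R,\infty)$ contributes $O(R^{2\kappa-2})$, and the substitution $t=r^4$ rewrites this convergent integral as exactly the integral appearing in the definition of $z_2$. Assembling the boundary term, the logarithmic term, the value of the convergent integral and the two $O(R^{2\kappa-2})$ error contributions produces the claimed asymptotic, and a crude estimate of that integral (whose integrand is positive for $t$ near $1$) gives $z_2>0$.

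There is no real obstacle here beyond careful bookkeeping. The sole quantitative input besides Lemma~\ref{lem.SumPhii} is the bound $\kappa<1$ on the Gauss-circle exponent, which is precisely what makes the integral defining $z_2$ converge and the error term have the stated size $O(R^{2\kappa-2})$. The one delicate point is keeping track of the numerical constants — in particular matching the convergent integral to the $t^{1/4}$-normalisation used in the definition of $z_2$ — so that the main term and the additive constant come out exactly as written.
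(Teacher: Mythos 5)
Your plan is the right one, and it is almost certainly the argument hidden inside the cited proof of \cite[Proposition 5.2]{Meas2018}: start from Lemma~\ref{lem.SumPhii}, apply partial summation to pass from $\sum\phi_i(s)$ to $\sum\phi_i(s)/|s|^4$, split $A(r)=z_1r^4+(A(r)-z_1r^4)$, and use $\kappa<1$ so that the remainder integral converges and its tail is $O(R^{2\kappa-2})$. The one place you explicitly defer -- ``keeping track of the numerical constants'' -- is, however, exactly where the proposal breaks down, because the constants you would get do not match the lemma as printed.

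Carry out your own bookkeeping. The boundary term gives $A(R)/R^4=z_1+O(R^{2\kappa-2})$. The main part of the integral gives $4\int_1^R z_1 r^4\cdot r^{-5}\,\mathrm{d}r=4z_1\log R$, so the ``constant multiple'' of $\log R$ is $4z_1$, not $z_1$. And the convergent remainder $4\int_1^\infty r^{-5}\bigl(A(r)-z_1r^4\bigr)\,\mathrm{d}r$ transforms under $t=r^4$ to exactly $z_2$ (the factor $4$ and the Jacobian $\tfrac14 t^{-3/4}$ cancel). Assembling these therefore produces
\begin{equation*}
\sum_{\substack{s\in\Z[i]^+\\|s|\le R}}\frac{\phi_i(s)}{|s|^4}
=4z_1\log R+(z_1+z_2)+O\!\left(R^{2\kappa-2}\right),
\end{equation*}
which is \emph{four times} the main terms claimed in the lemma statement. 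This is not a slip on your side: an independent cross-check via $\phi_i(s)/|s|^2=\sum_{d\mid s}\mu_i(d)/|d|^2$ and Lemma~\ref{lem.PowSum2} gives the $\log$-coefficient $\tfrac{\pi}{2}\zeta_i^{-1}(2)=4z_1$, confirming the computation. So the lemma as printed appears to carry a factor-of-$4$ typo (harmless in the paper, since the $\log$ term is ultimately swallowed by the $O_\epsilon(S^{2\kappa+\epsilon})$ error in Theorem~\ref{thm.kthMom}); but your write-up, by waving at ``a constant multiple'' and then asserting the claimed asymptotic ``is produced'', papers over a genuine mismatch rather than resolving it. You should either establish the corrected formula or flag that the stated one cannot be right.

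A smaller but real gap: your justification that $z_2>0$ -- that the integrand is positive for $t$ near $1$ -- does not suffice. The integrand $\sum_{|s|\le t^{1/4}}\phi_i(s)-z_1t$ equals $1-z_1t$ for $1\le t<4$, which already changes sign before $t=4$ (since $z_1=\tfrac{\pi}{8}\zeta_i^{-1}(2)\approx 0.26$), and thereafter it oscillates as a lattice-point error term. Positivity of the integral therefore needs an actual estimate balancing the positive and negative contributions, not a remark about the behavior near the left endpoint.
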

\begin{proof}
  The proof of this lemma is included in the proof of \cite[Proposition 5.2]{Meas2018}.
\end{proof}

\begin{lemma}\label{lem.PhiPowSum3} For each $k> 2$ there is a constant $z_k>0$ such that, for $R\ge 1$,
\begin{equation*}
  \sum_{\substack{s\in\Z[i]^+\\|s|\le R}}\frac{\phi_i(s)}{|s|^{2k}}=z_k+O_k\left(\frac{1}{R^{2(k-2)}}\right).
\end{equation*}
\end{lemma}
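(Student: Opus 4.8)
The plan is to establish convergence of the series $\sum_{s\in\Z[i]^+}\phi_i(s)/|s|^{2k}$ for $k>2$, take $z_k$ to be its value, and then estimate the tail. First I would observe that by the identity $\phi_i(s)\le|s|^2$ (which follows from $\sum_{d\mid s}\phi_i(d)=|s|^2$ and positivity), the terms are bounded by $|s|^{2-2k}=|s|^{-2(k-1)}$, and since $k>2$ we have $2(k-1)>2$, so Lemma~\ref{lem.PowSumTail1} applies (with $k-1>1$ in place of its $k$) to give both absolute convergence of the series and the tail bound
\begin{equation*}
  \sum_{|s|>R}\frac{\phi_i(s)}{|s|^{2k}}\le\sum_{|s|>R}\frac{1}{|s|^{2(k-1)}}\ll_k\frac{1}{R^{2(k-2)}}.
\end{equation*}
Defining $z_k=\sum_{s\in\Z[i]^+}\phi_i(s)/|s|^{2k}$, which is a positive real number since every term is positive, the stated asymptotic formula is then immediate:
\begin{equation*}
  \sum_{\substack{s\in\Z[i]^+\\|s|\le R}}\frac{\phi_i(s)}{|s|^{2k}}=z_k-\sum_{|s|>R}\frac{\phi_i(s)}{|s|^{2k}}=z_k+O_k\left(\frac{1}{R^{2(k-2)}}\right).
\end{equation*}

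An alternative, which perhaps fits more neatly alongside the partial-summation style of Lemmas~\ref{lem.PowSum2} and~\ref{lem.PhiPowSum2}, would be to start directly from Lemma~\ref{lem.SumPhii}, write $\Phi(R)=\sum_{|s|\le R,\,s\in\Z[i]^+}\phi_i(s)=z_1R^4+O(R^{2+2\kappa})$, and apply Abel summation to $\sum_{|s|\le R}\phi_i(s)|s|^{-2k}$ with the smooth weight $t^{-2k}$. This produces a main term $\int_1^R \Phi(t)\,\mathrm{d}(t^{-2k})\cdot(\text{const})$ type expression; since $k>2$ the contribution of the $z_1t^4$ piece integrates to a convergent quantity as $R\to\infty$ plus a tail of size $O(R^{4-2k})=O(R^{-2(k-2)})$, and the $O(t^{2+2\kappa})$ error, weighted by $t^{-2k-1}$, integrates to $O(R^{2+2\kappa-2k})$, which is absorbed into $O(R^{-2(k-2)})$ as long as $\kappa<1$ — which holds since $\kappa\le 131/416$. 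Either route works; I would present the first since it is shorter and self-contained.

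The only point requiring any care is the bookkeeping on exponents to confirm that the error term really is $O_k(R^{-2(k-2)})$ and not something larger: in the second approach one must check $2+2\kappa-2k\le -2(k-2)$, i.e. $2\kappa\le 2$, which is fine. There is no genuine obstacle here — the hypothesis $k>2$ is exactly what is needed to push the exponent $2(k-1)$ past the convergence threshold of $2$ in Lemma~\ref{lem.PowSumTail1}, and the positivity of $\phi_i$ makes the existence and positivity of $z_k$ transparent. I would therefore write the proof in two or three lines along the lines of the first paragraph above.
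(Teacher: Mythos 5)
Your first argument — bounding $\phi_i(s)\le|s|^2$, invoking Lemma~\ref{lem.PowSumTail1} with $k-1>1$ to get convergence and the tail bound $\ll_k R^{-2(k-2)}$, defining $z_k$ as the full sum, and writing the partial sum as $z_k$ minus the tail — is exactly the paper's proof. The alternative Abel-summation route you sketch also works, but since you rightly prefer the short argument, there is nothing to add.
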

\begin{proof}
  Since $\phi_i(s)\le |s|^2$, it follows from Lemma \ref{lem.PowSumTail1} that
  \begin{equation*}
    z_k=\sum_{|s|\in\Z[i]^+}\frac{\phi_i(s)}{|s|^{2k}}
  \end{equation*}
  is positive and finite. By the same result, we also have that
  \begin{align*}
    \sum_{\substack{s\in\Z[i]^+ \\ |s|\le R}}\frac{\phi_i(s)}{|s|^{2k}}&=z_k-\sum_{\substack{s\in\Z[i]^+\\|s|> R}}\frac{\phi_i(s)}{|s|^{2k}}\\
    &=z_k+O_k\left(\frac{1}{R^{2(k-2)}}\right).
  \end{align*}
\end{proof}

\begin{lemma}[\textbf{Abel's summation formula}]\label{lem.AbelSumForm} Suppose we have functions $a:\N\rar\R$ and $f:\R\rar\R$, and that $f'(x)$ exists and is continuous. Let $A(x)=\sum_{n\le x}a(n)$. Then
\begin{equation*}
\sum_{n\le x} a(n)f(n)=A(x)f(x)-\int_1^xA(t)f'(t)~\mathrm{d}t.
\end{equation*}
\end{lemma}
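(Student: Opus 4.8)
The plan is to derive the formula from discrete summation by parts combined with the fundamental theorem of calculus. Write $N=\lfloor x\rfloor$ (the case $x<1$, where both sides vanish, being trivial), and adopt the convention that $A(t)=0$ for $t<1$. Then $A$ is a step function which is constant, equal to $A(n)$, throughout each interval $[n,n+1)$ with $n\in\N$, and $a(n)=A(n)-A(n-1)$ for every $n\in\N$, with $A(0)=0$. Since $f'$ is continuous on $[1,x]$, the integrand $A(t)f'(t)$ is bounded there with only finitely many discontinuities, hence Riemann integrable, so every integral written below is well defined.

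First I would carry out the Abel rearrangement on the finite sum. Using $a(n)=A(n)-A(n-1)$, reindexing the resulting second sum, and using $A(0)=0$,
\[
\sum_{n\le x}a(n)f(n)=\sum_{n=1}^N\bigl(A(n)-A(n-1)\bigr)f(n)=A(N)f(N)+\sum_{n=1}^{N-1}A(n)\bigl(f(n)-f(n+1)\bigr).
\]
Next, for each $n$ with $1\le n\le N-1$ the fundamental theorem of calculus gives $f(n)-f(n+1)=-\int_n^{n+1}f'(t)\,\mathrm{d}t$, and since $A(t)=A(n)$ throughout $[n,n+1)$ one may pull this constant inside the integral to write $A(n)\bigl(f(n)-f(n+1)\bigr)=-\int_n^{n+1}A(t)f'(t)\,\mathrm{d}t$. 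Summing over $n$ from $1$ to $N-1$ collapses the sum into a single integral,
\[
\sum_{n=1}^{N-1}A(n)\bigl(f(n)-f(n+1)\bigr)=-\int_1^NA(t)f'(t)\,\mathrm{d}t.
\]

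Finally I would dispose of the leftover boundary term $A(N)f(N)$, which is the one place where the real parameter $x$, as opposed to an integer, genuinely enters. On $[N,x]$ the function $A$ is constant and equal to $A(N)=A(x)$, so $\int_N^xA(t)f'(t)\,\mathrm{d}t=A(x)\bigl(f(x)-f(N)\bigr)$, hence $A(N)f(N)=A(x)f(x)-\int_N^xA(t)f'(t)\,\mathrm{d}t$. Substituting this together with the previous display into the Abel rearrangement gives
\[
\sum_{n\le x}a(n)f(n)=A(x)f(x)-\int_N^xA(t)f'(t)\,\mathrm{d}t-\int_1^NA(t)f'(t)\,\mathrm{d}t=A(x)f(x)-\int_1^xA(t)f'(t)\,\mathrm{d}t,
\]
which is the claimed identity. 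The argument is entirely routine; the only points requiring any care are the Riemann integrability of the step-function-times-continuous-function integrand and the separate bookkeeping on the fractional interval $[N,x]$.
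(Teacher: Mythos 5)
Your proof is correct and complete: it is the standard derivation of Abel's summation formula via discrete summation by parts, the fundamental theorem of calculus on each unit interval, and separate bookkeeping on the fractional interval $[N,x]$. The paper states this classical lemma without proof, so there is nothing to compare against; your argument is a valid and careful justification of it.
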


\subsection{Geometric criterion for consecutivity}\label{subsec.Consec} In this subsection we recall a geometric criterion, introduced in \cite[Section 4]{Meas2018}, which is useful for evaluating summations over pairs of Gaussian integers $s$ and $s'$ which occur as denominators of consecutive fractions in $\mathcal{G}_S$. First of all, it is proved in \cite[Lemma 4.3]{Meas2018} that two Gaussian integers $s$ and $s'$ will appear as consecutive denominators in $\mathcal{G}_S$ if and only if all of the following conditions are satisfied:
\begin{itemize}
	\item[(i)] $|s|,|s'|\le S$,
	\item[(ii)] $(s,s')=1$, and
	\item[(iii)] $|s'+us|>S$ for some $u\in\{\pm 1, \pm i\}$.
\end{itemize}
Furthermore, if all of these conditions are satisfied, then there are exactly four distinct pairs $r,r'\in\Z[i]$ for which $r/s$ and $r'/s'$ are consecutive in $\mathcal{G}_S$.

For $S\ge 1$ and $s\in\mathcal{G}_S$, let $\Omega_s$ denote the set of points in the complex plane which lie inside the circle of radius $S$ centered at the origin, and outside at least one of the circles of radius $S$ centered at the points $\pm s$ and $\pm is$. This region is illustrated in Figure 1. The definition of the region depends on $S$, but for aesthetic purposes we suppress this dependence in our notation.

\begin{figure}[h]\label{fig.Omega_s}
	\centering
	\def\svgwidth{0.8\columnwidth}
	\includegraphics{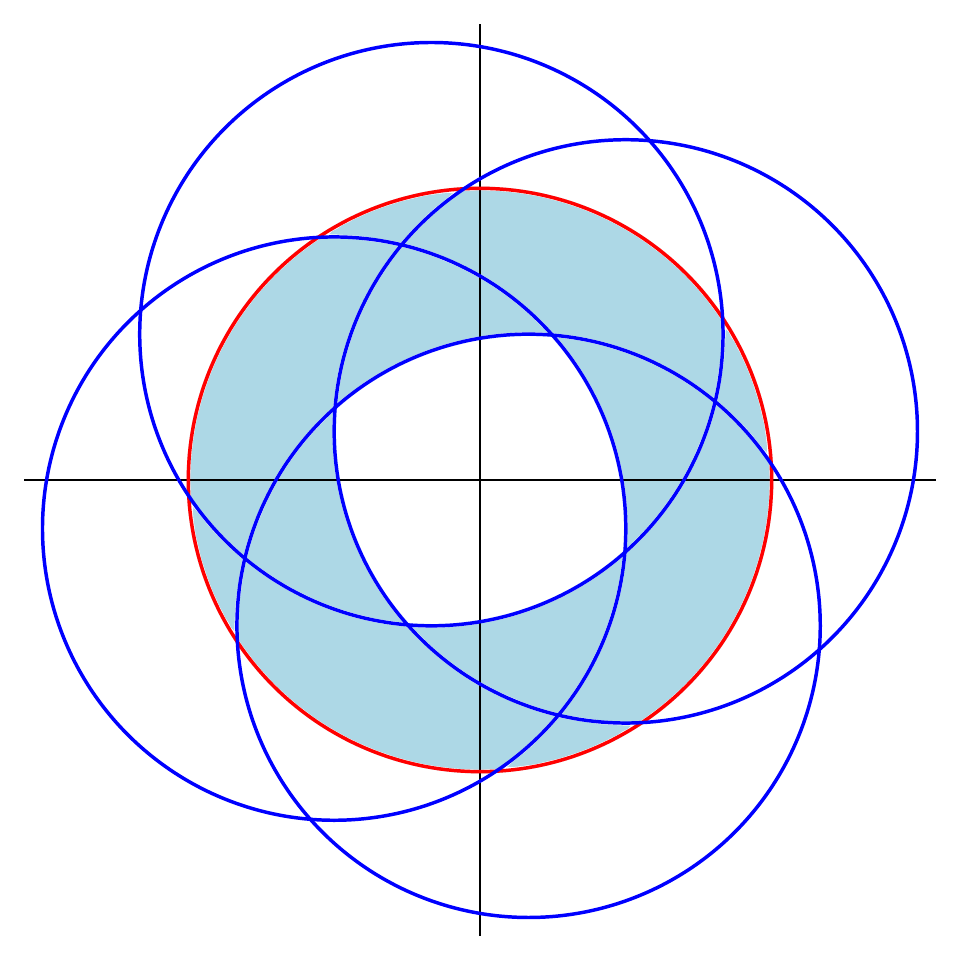}
	\caption{The $5$ circles each have radius $S$, and they are centered at the points $0, \pm s$ and $\pm is.$ The shaded region is $\Omega_s$}
\end{figure}

The characterization of consecutive denominators given in the first paragraph of this subsection shows that there is a natural bijective correspondence between the collection of points $s'\in\Omega_s\cap\Z[i]$ which satisfy $(s,s')=1,$ and the collection of fractions which are consecutive in $\mathcal{G}_S$ to a fraction with denominator $s$.

In the calculations which follow we will need a formula for the area of $\Omega_s$ which captures the way in which it depends on $|s|$ and $S$. For fixed $S$ we define a function $I_1:[0,\sqrt{2}S]\rar\R$ by
\begin{equation*}
I_1(t)=8S^2\int_0^{\sin^{-1}\left(\frac{t}{\sqrt{2}S}\right)}\cos^2u~\mathrm{d}u.
\end{equation*}
Then, as proved in \cite[Proposition 5.1]{Meas2018}, the area of $\Omega_s$ is given by the formula
\begin{equation}\label{eqn.OmegaArea}
|\Omega_s|=I_1\left(|s|\right)-2|s|^2.
\end{equation}
This formula will be useful to us in the next sections.

\section{Proof of Theorem \ref{thm.kthMom}, $k=2$ case}
First we deal with the $k=2$ case of Theorem \ref{thm.kthMom}. The general case, which will be explained in the next section, is similar, but there are a few differences in the calculations. We begin by expanding the square in the summand defining $\mathcal{M}_2(S)$ to write it as
\begin{align}
  \mc{M}_2(S)=\sum_{\frac{r}{s}\in\mc{G}_S}\frac{\#\{r'/s'\in \mc{G}_S ~\text{consecutive to}~ r/s\}}{4|s|^4}+\sum_{\substack{\frac{r}{s},\frac{r'}{s'}\in\mc{G}_S\\
  \text{consecutive}}}\frac{1}{2|s|^2|s'|^2}=\Sigma_1+\Sigma_2.\label{eqn.2ndMomAsym1}
\end{align}
It is important to keep in mind that the sum defining $\mathcal{M}_2(S)$ is over unordered consecutive pairs $r/s$ and $r'/s'$ in $\mathcal{G}_S$, which is why there is $4$ in the denominator of $\Sigma_1$ and not a $2$. 

The sum $\Sigma_2$ will end up being asymptotically smaller than our main error term, so we estimate it first. From the characterization of consecutivity provided in Section \ref{subsec.Consec} we know that if a pair $s,s'$ occurs as a pair of consecutive denominators in $\mathcal{G}_S$ then there are four possible choices for the corresponding pairs of numerators. By multiplying by appropriate units we may also assume without loss of generality that $s$ and  $s'$ lie in $\Z[i]^+$, so we have that
\begin{align}
  \Sigma_2&=\sum_{\substack{s\in\Z[i]^+\\|s|\le S}}\sum_{\substack{s'\in\Z[i]^+\\s'\text{cons to}~s}}\frac{1}{|s|^2|s'|^2}.\nonumber
\end{align}
Although the constant here is not important, for completeness we mention that we have also divided by an extra factor of $2$ to account for the fact that the double sum on the right hand side counts each pair $s,s'$ twice. Using Lemma \ref{lem.PowSum2} we now have that
\begin{align}
  \Sigma_2&\ll \sum_{\substack{s,s'\in\Z[i]^+\\|s|,|s'|\le S}}\frac{1}{|s|^2|s'|^2}\nonumber\\
  &\le \left(\sum_{\substack{|s|\le S}}\frac{1}{|s|^2}\right)^2\nonumber\\
  &\ll \log^2 S.\label{eqn.Sigma2Est1}
\end{align}
For $\Sigma_1$ we have that
\begin{align}
  \Sigma_1&=\sum_{\substack{s\in\Z[i]^+\\|s|\le S}}\sum_{\substack{s'\in\Z[i]^+\\s'\text{cons to}~s}}\frac{1}{|s|^4}\nonumber\\
  &=\frac{1}{4}\sum_{\substack{s\in\Z[i]^+\\|s|\le S}}\frac{1}{|s|^4}\sum_{\substack{s'\in\Omega_s\cap\Z[i]\\(s,s')=1}}1\label{eqn.DefSig1}.
  \end{align}
  Using M\"{o}bius inversion on the inner sum gives
  \begin{align*}
    \sum_{\substack{s'\in\Omega_s\cap\Z[i]\\(s,s')=1}}1&=\sum_{s'\in\Omega_s\cap\Z[i]}\sum_{d|s,s'}\mu_i(d)\\
    &=\sum_{d|s}\mu_i(d)\sum_{z\in d^{-1}\Omega_s}1.
  \end{align*}
  For each $d$, the number of lattice points in $d^{-1}\Omega_s$ is equal to the number of lattice points in the circle of radius $S/|d|$ centered at the origin, minus the number of lattice points in the intersection of the four translates of this circle by $\pm s/|d|$ and $\pm is/|d|$. The number of lattice points in each of these two convex regions can be calculated, via the Gauss circle method, using the machinery in \cite{Huxl2003}, and (see the comments immediately following \cite[Equation (1.1)]{Huxl2003}) the implied constants in the resulting error terms can be taken to be the same. This leads to the estimate
  \begin{equation*}
    \sum_{z\in d^{-1}\Omega_s}1=\frac{|\Omega_s|}{|d|^2}+O\left(\frac{S^{2\kappa}}{|d|^{2\kappa}}\right),
  \end{equation*}
  and using this in the equation above, we have that
  \begin{align}
    \sum_{\substack{s'\in\Omega_s\cap\Z[i]\\(s,s')=1}}1&=|\Omega_s|\sum_{d|s}\frac{\mu_i(d)}{|d|^2}+O\left(S^{2\kappa}\sum_{d|s}\frac{|\mu_i(d)|}{|d|^{2\kappa}}\right)\nonumber\\
    &=\frac{\phi_i(s)}{|s|^2}|\Omega_s|+O_\epsilon(S^{2\kappa+\epsilon}).\label{eqn.CoprimeLatPts2}
  \end{align}
  To briefly explain the estimate used in the error term here, first of all notice that
  \begin{equation*}
    \sum_{d|s}\frac{|\mu_i(d)|}{|d|^{2\kappa}}=\prod_{p|s}\left(1+\frac{1}{|p|^{2\kappa}}\right)\le\exp\left(c_1\sum_{p|s}\frac{1}{|p|^{2\kappa}}\right),
  \end{equation*}
  for some constant $c_1>0$. Using the prime number theorem for Gaussian integers (see the proof of \cite[Theorem 4.1]{Meas2018} for a more detailed explanation), there is a constant $c_2>0$ with the property that
  \begin{align*}
    \sum_{p|s}\frac{1}{|p|^{2\kappa}}\le\sum_{p^2\le c_2\log|s|}\frac{1}{|p|^{2\kappa}}\ll\frac{(\log |s|)^{1-2\kappa}}{\log\log|s|},
  \end{align*}
  and since
  \begin{equation*}
    \exp\left(\frac{(\log |s|)^{1-2\kappa}}{\log\log|s|}\right)\ll_\epsilon |s|^\epsilon,
  \end{equation*}
  for any $\epsilon>0,$ this explains the error term in \eqref{eqn.CoprimeLatPts2}.
  
  Returning to our estimate of $\Sigma_1$, we now have that
  \begin{align}
  \Sigma_1&=\frac{1}{4}\sum_{\substack{s\in\Z[i]^+\\|s|\le S}}\frac{\phi_i(s)}{|s|^6}|\Omega_s|+O_\epsilon\left(S^{2\kappa+\epsilon}\right)\nonumber\\
  &=\frac{1}{4}\Sigma_3+O_\epsilon(S^{2\kappa+\epsilon})\label{eqn.Sigma1Form1}.
\end{align}
Using formula \eqref{eqn.OmegaArea} we have that
\begin{equation*}
  \Sigma_3=\sum_{\substack{s\in\Z[i]^+\\|s|\le S}}\frac{\phi_i(s)}{|s|^6}I_1(|s|)-2\sum_{\substack{s\in\Z[i]^+\\|s|\le S}}\frac{\phi_i(s)}{|s|^4},
\end{equation*}
where
\begin{equation*}
  I_1(t)=8S^2\int_0^{\sin^{-1}\left(\frac{t}{\sqrt{2}S}\right)}\cos^2u~\mathrm{d}u.
\end{equation*}
From Lemma \ref{lem.PhiPowSum2} we have that
\begin{equation*}
  \sum\limits_{\substack{s\in\Z[i]^+\\|s|\le S}} \frac{\phi_i(s)}{|s|^4} = z_1\ln{S} + \frac{z_1 + z_2}{4} + O(S^{2\kappa-2}),
\end{equation*}
In order to estimate the other sum which appears above we first apply Lemma \ref{lem.AbelSumForm} with $x=S^6,~f(t)=1/t,$ and
\begin{equation*}
a_1(n)=\sum_{\substack{s\in\Z[i]^+\\|s|=n^{1/6}}}\phi_i(s)
\end{equation*}
to obtain
\begin{align*}
  \sum_{\substack{s\in\Z[i]^+\\|s|\le S}}\frac{\phi_i(s)}{|s|^6}&=\sum_{n\le x} a_1(n)f(n)\\
  &=S^{-6}\sum_{\substack{s\in\Z[i]^+\\|s|\le S}}\phi_i(s)+\int_1^{S^6}A_1(t)t^{-2}~\mathrm{d}t\\
  &=\frac{z_1}{S^2}+O(S^{2\kappa-4})+3z_1-\frac{3z_1}{S^2}+z_2'+O(S^{2\kappa-4})\\
  &=3z_1+z_2'-\frac{2z_1}{S^2}+O(S^{2\kappa-4}),
\end{align*}
with
\begin{equation*}
  z_2'=\int_1^\infty(A_1(t)-z_1t^{2/3})t^{-2}~\mathrm{d}t.
\end{equation*}
Next we apply Lemma \ref{lem.AbelSumForm} again, this time with $x=S^6,f(t)=I_1(t^{1/6}),$ and
\begin{equation*}
  a_2(n)=\frac{1}{n}\sum_{\substack{s\in\Z[i]^+\\|s|=n^{1/6}}}\phi_i(s).
\end{equation*}
We have that
\begin{align}
  A_2(t)=3z_1+z_2'-\frac{2z_1}{t^{1/3}}+O\left(t^{\frac{\kappa-2}{3}}\right),\label{eqn.ParSumEst3}
\end{align}
and that
\begin{align*}
  f'(t)&=8S^2\cos^2\left(\sin^{-1}\left(\frac{t^{1/6}}{\sqrt{2}S}\right)\right)\frac{\mathrm{d}}{\mathrm{d}t}\left(\sin^{-1}\left(\frac{t^{1/6}}{\sqrt{2}S}\right)\right)\\
  &=\frac{2\sqrt{2}}{3}St^{-5/6}\left(1-\frac{t^{1/3}}{2S^2}\right)^{1/2}.
\end{align*}
Therefore,
\begin{align}
  \int_{1}^{S^6}A_2(t)f'(t)~\mathrm{d}t&=\frac{2\sqrt{2}(3z_1+z_2')}{3}S\int_1^{S^6}t^{-5/6}\left(1-\frac{t^{1/3}}{2S^2}\right)^{1/2}~\mathrm{d}t\nonumber\\
  &\quad +\frac{2\sqrt{2}}{3}S\int_1^{S^6}t^{-5/6}(A_2(t)-(3z_1+z_2'))\left(1-\frac{t^{1/3}}{2S^2}\right)^{1/2}~\mathrm{d}t\nonumber\\
  &= X_1+X_2.\label{eqn.ACalc2}
\end{align}
Making the substitution $\sin u=t^{1/6}/(\sqrt{2}S)$, we find that
\begin{align*}
  X_1&=8(3z_1+z_2')S^2\int_{\sin^{-1}(1/\sqrt{2}S)}^{\pi/4}\cos^2u~\mathrm{d}u\\
  &=(3z_1+z_2')(\pi+2)S^2-4\sqrt{2}(3z_1+z_2')S+\frac{\sqrt{2}}{3S}+O(S^{-3}).
\end{align*}

Next let
\begin{equation}\label{eqn.z_2''Def}
z_2''=\frac{2\sqrt{2}}{3}\int_1^{\infty}t^{-5/6}(A_2(t)-(3z_1+z_2'))~\mathrm{d}t,
\end{equation}
which by  \eqref{eqn.ParSumEst3} is finite. Using a first order approximation for the function $(1-x)^{1/2}$ in the compact subregion $\{|x|\le 1/2\}$ of its interval of convergence, together with the estimate in \eqref{eqn.ParSumEst3}, we have that
\begin{align*}
&\frac{2\sqrt{2}}{3}\int_1^{S^6}t^{-5/6}(A_2(t)-(3z_1+z_2'))\left(1-\frac{t^{1/3}}{2S^2}\right)^{1/2}~\mathrm{d}t\\
&\qquad=\frac{2\sqrt{2}}{3}\int_1^{S^6}t^{-5/6}(A_2(t)-(3z_1+z_2'))~\mathrm{d}t+O\left(\frac{1}{S^2}\int_1^{S^6}t^{-5/6}~\mathrm{d}t\right)\\
&\qquad=z_2''+O\left(\frac{1}{S}\right),
\end{align*}
which proves that
\begin{align*}
  X_2=z_2''S+O\left(1\right).
\end{align*}

Substituting into \eqref{eqn.ACalc2} and using Lemma \ref{lem.AbelSumForm} now gives that
\begin{align*}
  \sum_{\substack{s\in\Z[i]^+\\|s|\le S}}\frac{\phi_i(s)}{|s|^6}I_1(|s|)&=A_2(S^6)f(S^6)-\int_1^{S^6}A_2(t)f'(t)~\mathrm{d}t\\
  &=\left(3z_1+z_2'-\frac{2z_1}{S^2}+O(S^{2\kappa-4})\right)(\pi+2)S^2-X_1-X_2\\
  &=\left(4\sqrt{2}(3z_1+z_2')-z_2''\right)S+O(1).
\end{align*}
Using these formulas in \eqref{eqn.2ndMomAsym1} and \eqref{eqn.Sigma1Form1} then gives the statement of Theorem \ref{thm.kthMom}, with
\begin{equation*}
  \xi_2=\sqrt{2}(3z_1+z_2')-\frac{z_2''}{4}.
\end{equation*}
Finally, in order to verify that $\xi_2> 0$, consider the contribution to $\Sigma_1$, as written in \eqref{eqn.DefSig1}, coming from the $s=1$ term. The coprimeness condition on the inner sum is automatically satisfied and, again using the machinery from \cite{Huxl2003} (see comments above relating to the uniformness of the constants in the error terms), we find that
\begin{align*}
\#\{s'\in\Omega_s\cap\Z[i]\}\ge \#\{s'\in\Z[i]:|s|\le S, |s-1|\ge S\}\gg S.
\end{align*}
Therefore $\mc{M}_2(S)\gg S$ and $\xi_2> 0$.

\section{Proof of Theorem \ref{thm.kthMom}, $k>2$ case}
Suppose now that $k>2$. First we write
\begin{align}
  \mc{M}_k(S)&=\frac{1}{2^k}\sum_{\frac{r}{s}\in\mc{G}_S}\frac{\#\{r'/s'\in \mc{G}_S ~\text{consecutive to}~ r/s\}}{|s|^{2k}}\nonumber\\
  &\quad +\frac{1}{2^k}\sum_{\ell=1}^{k-1}\binom{k}{\ell}\sum_{\substack{\frac{r}{s},\frac{r'}{s'}\in\mc{G}_S\\\text{consecutive}}}\frac{1}{|s|^{2\ell}|s'|^{2(k-\ell)}}\nonumber\\
  &=\Sigma_1+\Sigma_2.\label{eqn.kthMomAsym1}
\end{align}
Using Lemmas \ref{lem.PowSum2} and \ref{lem.PowSumTail1}, we have that
\begin{align}
  \Sigma_2&\ll_k\sum_{\ell=1}^{k-1}\sum_{\substack{s\in\Z[i]^+\\|s|\le S}}\sum_{\substack{s'\in\Z[i]^+\\s'\text{cons to}~s}}\frac{1}{|s|^{2\ell}|s'|^{2(k-\ell)}}\nonumber\\
  &\ll \sum_{\ell=1}^{k-1}\left(\sum_{\substack{s\in\Z[i]^+\\|s|\le S}}\frac{1}{|s|^{2\ell}}\right)\left(\sum_{\substack{s'\in\Z[i]^+\\|s'|\le S}}\frac{1}{|s'|^{2(k-\ell)}}\right) \nonumber\\ 
  &\ll_k \log S.\label{eqn.Sigma2EstKthMom}
\end{align}
This is asymptotically smaller than what we obtained in the $k=2$ case because the inner sum is divergent only if $\ell=1$ or $k-1$, and in either of these cases the other exponent appearing in the inner summand, $2(k-\ell)$ or $2\ell$, is at least $4$.

Next, using \eqref{eqn.CoprimeLatPts2} we have that
\begin{align}
  \Sigma_1&=\frac{1}{2^{k-2}}\sum_{\substack{s\in\Z[i]^+\\|s|\le S}}\sum_{\substack{s'\in\Z[i]^+\\s'\text{cons to}~s}}\frac{1}{|s|^{2k}}\nonumber\\  
  &=\frac{1}{2^{k}}\sum_{\substack{s\in\Z[i]^+\\|s|\le S}}\frac{1}{|s|^{2k}}\sum_{\substack{s'\in\Omega_s\cap\Z[i]\\(s,s')=1}}1\nonumber\\  
  &=\frac{1}{2^{k}}\sum_{\substack{s\in\Z[i]^+\\|s|\le S}}\frac{\phi_i(s)}{|s|^{2k+2}}|\Omega_s|+O_\epsilon\left(S^{2\kappa+\epsilon}\right)\nonumber\\
  &=\frac{1}{2^{k}}\Sigma_3+O_\epsilon(S^{2\kappa+\epsilon})\label{eqn.Sigma1Form2}.
\end{align}
As before, we write
\begin{equation*}
  \Sigma_3=\sum_{\substack{s\in \Z[i]^+ \\ |s| \leq S}}\frac{\phi_i(s)}{|s|^{2k+2}}I_1(|s|)-2\sum_{\substack{s\in \Z[i]^+ \\ |s| \leq S}}\frac{\phi_i(s)}{|s|^{2k}}.
\end{equation*}
From Lemma \ref{lem.PhiPowSum3} we have that
\begin{equation*}
  \sum_{\substack{s\in \Z[i]^+ \\ |s| \leq S}}\frac{\phi_i(s)}{|s|^{2k}}=z_k+O\left(\frac{1}{S^{2(k-2)}}\right),
\end{equation*}
and for the other sum we first apply Lemma \ref{lem.AbelSumForm} with $x=S^{2k+2},~f(t)=1/t,$ and
\begin{equation*}
a_1(n)=\sum_{\substack{s\in\Z[i]^+\\|s|=n^{1/(2k+2)}}}\phi_i(s)
\end{equation*}
to obtain
\begin{align*}
  \sum_{\substack{s\in \Z[i]^+ \\ |s| \leq S}}\frac{\phi_i(s)}{|s|^{2k+2}}&=\sum_{n\le x} a_1(n)f(n)\\
  &=S^{-2-2k}\sum_{\substack{s\in \Z[i]^+ \\ |s| \leq S}}\phi_i(s)+\int_1^{S^{2k+2}}A_1(t)t^{-2}~\mathrm{d}t\\
  &=\frac{z_1}{S^{2k-2}}+O(S^{2(\kappa-k)})+\left(\frac{k+1}{k-1}\right)z_1+O(S^{2(1-k)})+z_k'+O(S^{2(\kappa-k)})\\
  &=\left(\frac{k+1}{k-1}\right)z_1+z_k'+O(S^{2(1-k)}),
\end{align*}
with
\begin{equation*}
  z_k'=\int_1^\infty(A_1(t)-z_1t^{2/(k+1)})t^{-2}~\mathrm{d}t.
\end{equation*}
Next we apply Lemma \ref{lem.AbelSumForm} again, with $x=S^{2k+2},f(t)=I_1\left(t^{1/(2k+2)}\right),$ and
\begin{equation*}
  a_2(n)=\frac{1}{n}\sum_{\substack{s\in\Z[i]^+\\|s|=n^{1/(2k+2)}}}\phi_i(s).
\end{equation*}
We have that
\begin{align}
  A_2(t)=\left(\frac{k+1}{k-1}\right)z_1+z_k'+O\left(t^{\frac{1-k}{1+k}}\right),\label{eqn.ParSumEst4}
\end{align}
and that
\begin{align*}
  f'(t)&=8S^2\cos^2\left(\sin^{-1}\left(\frac{t^{1/(2k+2)}}{\sqrt{2}S}\right)\right)\frac{\mathrm{d}}{\mathrm{d}t}\left(\sin^{-1}\left(\frac{t^{1/(2k+2)}}{\sqrt{2}S}\right)\right)\\
  &=\frac{4\sqrt{2}}{2k+2}St^{-(2k+1)/(2k+2)}\left(1-\frac{t^{1/(k+1)}}{2S^2}\right)^{1/2}.
\end{align*}
Therefore,
\begin{align}
  &\int_{1}^{S^{2k+2}}A_2(t)f'(t)~\mathrm{d}t\nonumber\\
  &\qquad=\frac{4\sqrt{2}}{2k+2}\left(\left(\frac{k+1}{k-1}\right)z_1+z_k'\right)S\int_1^{S^{2k+2}}t^{-(2k+1)/(2k+2)}\left(1-\frac{t^{1/(k+1)}}{2S^2}\right)^{1/2}~\mathrm{d}t\nonumber\\
  &\qquad\quad +\frac{4\sqrt{2}}{2k+2}S\int_1^{S^{2k+2}}t^{-(2k+1)/(2k+2)}\left(A_2(t)-\left(\frac{k+1}{k-1}\right)z_1-z_k'\right)\left(1-\frac{t^{1/(k+1)}}{2S^2}\right)^{1/2}~\mathrm{d}t\nonumber\\
  &\qquad= X_1+X_2.\nonumber
\end{align}
Making the substitution $\sin u=t^{1/(2k+2)}/(\sqrt{2}S)$, we find that
\begin{align*}
  X_1&=8\left(\left(\frac{k+1}{k-1}\right)z_1+z_k'\right)S^2\int_{\sin^{-1}(1/\sqrt{2}S)}^{\pi/4}\cos^2u~\mathrm{d}u\\
  &=\left(\left(\frac{k+1}{k-1}\right)z_1+z_k'\right)(\pi+2)S^2-4\sqrt{2}\left(\left(\frac{k+1}{k-1}\right)z_1+z_k'\right)S+O\left(S^{-1}\right).
\end{align*}
Using \eqref{eqn.ParSumEst4}, we have that
\begin{align*}
  X_2=z_k''S+O\left(S^{-2k+3}\right)=z_k''S+O\left(S^{-2}\right),
\end{align*}
with
\begin{equation*}
  z_k''=\frac{4\sqrt{2}}{2k+2}\int_1^{\infty}t^{-(2k+1)/(2k+2)}\left(A_2(t)-\left(\frac{k+1}{k-1}\right)z_1-z_k'\right)~\mathrm{d}t.
\end{equation*}
This gives that
\begin{align*}
  \sum_{\substack{s\in \Z[i]^+ \\ |s| \leq S}}\frac{\phi_i(s)}{|s|^{2k+2}}I_1(|s|)&=A_2(S^{2k+2})f(S^{2k+2})-\int_1^{S^{2k+2}}A_2(t)f'(t)~\mathrm{d}t\\
  &=\left(\left(\frac{k+1}{k-1}\right)z_1+z_k'+O\left(S^{2-2k}\right)\right)(\pi+2)S^2-X_1-X_2\\
  &=\left(4\sqrt{2}\left(\left(\frac{k+1}{k-1}\right)z_1+z_k'\right)-z_k''\right)S+O(S^{-1}).
\end{align*}
Using these formulas gives the statement of Theorem \ref{thm.kthMom}, with
\begin{equation*}
  \xi_k=\frac{1}{2^{k}}\left(4\sqrt{2}\left(\left(\frac{k+1}{k-1}\right)z_1+z_k'\right)-z_k''\right).
\end{equation*}
As in the $k=2$ case, the contribution to $\Sigma_1$ from $s=1$ is large enough to guarantee that $\xi_k> 0$.

\vspace{.2in}

{\footnotesize

\noindent
AH: Department of Mathematics, University of Houston,\\
Houston, TX, United States.\\
haynes@math.uh.edu\\

\noindent
KM: Department of Mathematics, University of York,\\
Heslington, York, YO10 5DD, England\\
kekm501@york.ac.uk

}


\begin{thebibliography}{1}


\bibitem{Apos1990}
T.~M.~Apostol:
{\em Modular functions and Dirichlet series in number theory, Second edition,}
Graduate Texts in Mathematics, 41. Springer-Verlag, New York, 1990.

\vspace*{.1in}

\bibitem{BocaZaha2007}
F.~P.~Boca, A.~Zaharescu:
{\em The distribution of the free path lengths in the periodic two-dimensional Lorentz gas in the small-scatterer limit},
Comm. Math. Phys.  269  (2007),  no. 2, 425-471.

\vspace*{.1in}

\bibitem{ChauMaliZaha2015}
S.~Chaubey, A.~Malik, A.~Zaharescu:
{\em $k$-moments of distances between centers of Ford circles},
J. Math. Anal. Appl.  422  (2015),  no. 2, 906-919.


\vspace*{.1in}


\bibitem{Ford1938}
L.~R.~Ford:
{\em Fractions},
Amer. Math. Monthly  45  (1938),  no. 9, 586-601.

\vspace*{.1in}

\bibitem{Fran1924}
J.~Franel:
{\em Les suites de Farey et le probl\`{e}me des nombres premiers},
G\"{o}ttinger Nachrichten (1924), 198-201.

\vspace*{.1in}

\bibitem{Huxl1971}
M.~N.~Huxley:
{\em The distribution of Farey points I},
Acta Arith.  18  (1971), 281-287.

\vspace*{.1in}

\bibitem{Huxl2003}
M.~N.~Huxley:
{\em Exponential sums and lattice points III},
Proc. London Math. Soc. (3)  87  (2003),  no. 3, 591-609.

\vspace*{.1in}


\bibitem{Land1924}
E.~Landau:
{\em Bemerkungen zu der vorstehenden Abhandlung von Herrn Franel},
G\"{o}ttinger Nachrichten (1924), 202-206.

\vspace*{.1in}

\bibitem{Mark2010}
J.~Marklof:
{\em Horospheres and Farey fractions},
Dynamical numbers-interplay between dynamical systems and number theory, 
97-106, Contemp. Math., 532, Amer. Math. Soc., Providence, RI,  2010. 

\vspace*{.1in}

\bibitem{Meas2018}
K.~Measures:
{\em First moment of distances between centres of Ford spheres},
preprint, arXiv:1805.01508.

\vspace*{.1in}

\bibitem{Schi1972}
A.~Schinzel:
{\em Wac\l{}aw Sierpi\'{n}ski's papers on the theory of numbers},
Acta Arith.  21  (1972), 7-13.

\vspace*{.1in}

\bibitem{Sier1906}
W.~Sierpi\'{n}ski:
{\em O pewnem zagadneniu w rachunku funkcyj asymptotznych},
Prace Mat.-Fiz. 17 (1906), 77–118.



\end{thebibliography}
\end{document}